\newtheorem{teorema}{Theorem}
\newtheorem{definicion}[teorema]{Definition}
\newtheorem{proposicion}[teorema]{Proposition}
\newtheorem{corolario}[teorema]{Corollary}
\theoremstyle{definition}
\newtheorem{ejemplo}[teorema]{Example}
\newtheorem{remark}[teorema]{Remark}
\title{Hypersurfaces with a canonical principal direction}
\author[E. Garnica, O. Palmas and G. Ruiz-Hern\'andez]{Eugenio Garnica\and Oscar Palmas\and Gabriel Ruiz-Hern\'andez}
\keywords{Principal direction, constant mean curvature, transnormal functions, closed conformal vector field}
\begin{document}

\maketitle

\begin{abstract}
Given a vector field $X$ in a Riemannian manifold, a hypersurface is said to have a canonical principal direction relative to $X$ if the projection of $X$ onto the tangent space of the hypersurface gives a principal direction. We give different ways for building these hypersurfaces, as well as a number of useful characterizations. In particular, we relate them with transnormal functions and eikonal equations. With the further condition of having constant mean curvature (CMC) we obtain a characterization of the canonical principal direction surfaces in 
Euclidean space as Delaunay surfaces. We also prove that CMC constant angle hypersurfaces in a product $\mathbb{R}\times N$ are either totally geodesic or cylinders.
\end{abstract}

\section{Introduction}

The detailed study of some well-known curves and surfaces lead to quite interesting recent developments in differential geometry. For example, the logarithmic spiral in $\mathbb R^2$ and the standard helix in $\mathbb R^3$ may be considered as particular cases of a more general concept, that of submanifolds making a constant angle with a given, distinguished vector field. A number of research papers in this area include \cite{MR1363411}, \cite{MR2273746}, \cite{Scala}, \cite{MR2541342}, \cite{MR2681534}, \cite{MR2785730}, \cite{MR2443735}, \cite{MR2681099}, 
\cite{MR2537560}, and references therein.

It turns out that many constant angle hypersurfaces fall into a broader class of submanifolds, defined as follows. If $\bar M^{n+1}$ is a Riemannian manifold and $M$ is an orientable hypersurface of $\bar M$, $M$ is said to have a {\em canonical principal direction relative to a vector field $X\in\mathfrak{X}(\bar M)$} if the projection of $X$ into the tangent space of the hypersurface gives a principal direction. For example, rotation hypersurfaces in Euclidean spaces have a canonical principal direction relative to a vector field parallel to its rotation axis.

Special types of canonical principal direction hypersurfaces are studied in \cite{MR2506241}, \cite{Di-Mu-Ni} and \cite{MR2772433}. We point out our recent paper \cite{gpr1}, where we work in a warped product $I\times_\varrho N$ of a real interval with a manifold $N$. In this context, we proved that a hypersurface making a constant angle with the vector field $\partial_t$ tangent to the $I$-direction has a canonical principal direction relative to $\partial_t$. Noting that the vector field $\varrho\partial_t$ is an example of a {\em closed conformal} vector field (see the condition given in equation (\ref{eq:conformal})) and as a natural continuation of \cite{gpr1}, our aim here is to study the hypersurfaces with canonical principal direction relative to a closed conformal vector field $X$.

This paper is organized as follows. In Section \ref{sec:notacion} we will give some notation and basics, while in Section \ref{sec:examples} we will give several examples.
In Section \ref{sec:main} we will prove our main result, Theorem \ref{teo:teoremon}, giving a complete characterization of these hypersurfaces. Moreover, in Section \ref{sec:construction} we prove that every hypersurface of this type is locally the graph of a transnormal function, i.e., a function satisfying a condition over the norm of its gradient (see equation (\ref{eq:eikonal})). Finally, in Section \ref{sec:particularinstances} we characterize the canonical principal direction surfaces in Euclidean space as Delaunay surfaces and prove that CMC constant angle hypersurfaces in a product $\mathbb{R}\times N$ are either totally geodesic or cylinders.

\section{Basic properties and notation}\label{sec:notacion}

Hereafter, $\bar{M}^{n+1}$ will denote a Riemannian manifold, with connection $\bar{\nabla}$. Let $M$ be an orientable hypersurface of $\bar M$ with induced connection $\nabla$ and $\xi$ a unit vector field everywhere normal to $M$.

As usual, we have the Gauss and Weingarten
equations:
\[
\bar{\nabla}_YZ=\nabla_YZ+\alpha(Y,Z),\quad
\bar{\nabla}_Y\xi=-A_\xi Y,
\]
where $Y,Z\in\mathfrak{X}(M)$, $\alpha$ is the second fundamental form of $M$ in $\bar M$, $A_\xi$ is the shape operator associated to $\xi$. Recall also that
$\alpha$ and $A_\xi$ are related by the formula
\[
\langle \alpha(Y,Z),\xi \rangle = \langle A_\xi Y, Z \rangle.
\]

Let $X\in\mathfrak{X}(\bar M)$ denote a vector field whose restriction to $M$ is transversal to $\xi$. The vector fields $X^T,T\in\mathfrak{X}(M)$ are defined by
\[
 X^T=X-\langle X,\xi\rangle\xi\quad\mathrm{and}\quad T=\frac{X^T}{\vert X^T\vert}.
\]
Note that $X^T\ne 0$ by the transversality between $X$ and $\xi$.
Finally, the angle function $\theta\in (0,\pi)$ between $X$ and $\xi$ is given by
\[
\cos\theta=\left\langle \frac{X}{\vert X\vert},\xi\right\rangle.
\]

\medskip

The vector fields which we will distinguish are given in the following definition.

\begin{definicion}
A vector field $X\in\mathfrak{X}(\bar{M})$ is {\em closed conformal} if and only if
\begin{equation}\label{eq:conformal}
\bar{\nabla}_YX=\phi Y
\end{equation}
for every $Y\in\mathfrak{X}(\bar{M})$, where $\phi$ is a
differentiable function defined on $\bar{M}$.
\end{definicion}

A constant vector field and a radial vector field in $\mathbb R^n$ are examples of closed conformal vector fields with $\phi\equiv 0$ and $\phi\equiv 1$, respectively. Closed conformal vector fields have been studied extensively in many
contexts; see \cite{MR1722814}, in particular, where S. Montiel
proved many interesting facts about them, which we collect in the
following theorem and use freely in this paper:

\begin{teorema}\label{teo:montiel}
Let $\bar{M}^{n+1}$ be a Riemannian manifold endowed with a non-null closed conformal
vector field $X$ satisfying (\ref{eq:conformal}). Then,
\begin{itemize}
\item $X$ has only zero, one, or two zeroes.

\item Away from its zeroes, $X$ defines a $n$-dimensional
distribution $X^\perp$ by taking at each point the orthogonal
complement of $X$. This distribution is integrable and each leaf
of the corresponding foliation is totally umbilical in $\bar{M}$.

\item The functions $\vert X\vert$ and $\phi$ are constant
along each leaf of the foliation.

\item Fix a connected component $N$ of a leaf of the
foliation determined by $X$ and let $\psi_t$ be the local flow of
$X$, defined in an open interval $I\subset\mathbb R$. Then the
expression
\[
\varrho(t)=\vert X_{\psi_t(p)}\vert,\quad p\in N,
\]
does not depend on the particular value chosen for $p$ and
$\bar{M}$ is locally isometric to $I\times_\varrho N$.
From this form we may recover the closed conformal vector field
$X$ as
\[
X=\vert X\vert\, \partial_t = \varrho\,\partial_t,
\]
where $\partial_t$ is the lift to $\bar{M}$ of the canonical
vector field tangent to $I$.

\end{itemize}
\end{teorema}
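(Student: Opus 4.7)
The plan is to address the four bullets in order, starting from the infinitesimal consequences of $\bar\nabla_Y X = \phi Y$ and working up to the structural results. The first three items are essentially direct computations, while the zero-count and the warped-product form are where the main geometric argument lies.

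For integrability of $X^\perp$, I would take $Y,Z$ orthogonal to $X$ and compute
\[
\langle [Y,Z], X \rangle = Y\langle Z, X\rangle - \langle Z, \bar\nabla_Y X \rangle - Z\langle Y, X\rangle + \langle Y, \bar\nabla_Z X\rangle = -\phi\langle Z,Y\rangle + \phi \langle Y, Z\rangle = 0,
\]
so Frobenius applies. For umbilicity of a leaf $N$, set $\nu = X/|X|$ and expand $\bar\nabla_Y \nu$; the tangential part equals $(\phi/|X|)Y$, so the shape operator of $N$ is a scalar multiple of the identity, i.e.\ every point of $N$ is umbilical with principal curvature $-\phi/|X|$.

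The constancy of $|X|$ on a leaf is immediate: if $Y \perp X$, then $Y|X|^2 = 2\langle \bar\nabla_Y X, X\rangle = 2\phi\langle Y,X\rangle = 0$. For $\phi$, I would evaluate the curvature tensor on $X$; a short calculation using the closed conformal condition gives
\[
R(Y,Z)X = \bar\nabla_Y(\phi Z) - \bar\nabla_Z(\phi Y) - \phi[Y,Z] = Y(\phi)Z - Z(\phi)Y.
\]
Setting $Z = X$ and using $\langle R(Y,X)X, X\rangle = 0$ (from the skew-symmetry of $R$ in its last two slots), one obtains $Y(\phi)|X|^2 = X(\phi)\langle Y, X\rangle$, so $Y(\phi) = 0$ for every $Y \in X^\perp$.

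For the local warped-product structure I would fix a leaf $N_0$ through a point $p_0$ and consider the local flow $\psi_t$ of $X$. The map $\Psi(t,p) := \psi_t(p)$ is a local diffeomorphism from a neighborhood of $(0,p_0) \in I\times N_0$ onto $\bar M$. Because $|X|$ is leaf-constant, the function $\varrho(t) := |X|_{\psi_t(p)}$ depends only on $t$, and satisfies the ODE $\frac{d}{dt}|X|^2 = 2\phi|X|^2$ along one orbit. Orthogonality of the orbits of $X$ to each leaf $\psi_t(N_0)$ is preserved by the flow (a consequence of the conformal identity), and umbilicity forces the induced metric on $\psi_t(N_0)$ to be a scalar multiple of that of $N_0$; together these give $\Psi^*\bar g = dt^2 + \varrho(t)^2\, g_{N_0}$ after rescaling $t$ to arclength, from which $X = \varrho\,\partial_t$ follows at once. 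The main obstacle is the zero-count: near a zero $p$ the identity $\bar\nabla_Y X = \phi(p) Y$ forces $X$ to coincide in normal coordinates with the radial field scaled by $\phi(p)$, so zeroes are isolated and the leaves of $X^\perp$ degenerate to $p$; in the maximal warped-product extension, zeroes correspond to endpoints of the parameter interval $I$, and an interval has at most two endpoints. Making the global extension and the exclusion of other singular behavior rigorous is the delicate step where I would rely most on the detailed analysis in Montiel's paper \cite{MR1722814}.
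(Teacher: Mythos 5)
You should know at the outset that the paper does not prove this statement at all: Theorem \ref{teo:montiel} is explicitly a collection of results quoted from Montiel's paper \cite{MR1722814} and used as a black box, so there is no internal proof to compare your argument against. Judged on its own, your treatment of the first three bullets is correct and complete: the Frobenius computation $\langle [Y,Z],X\rangle=0$, the umbilicity of the leaves via $\bar\nabla_Y(X/\vert X\vert)$, the leaf-constancy of $\vert X\vert$ from $Y\vert X\vert^2=2\phi\langle Y,X\rangle$, and the curvature identity $R(Y,Z)X=Y(\phi)Z-Z(\phi)Y$ giving $Y(\phi)=0$ for $Y\perp X$ are all standard and correctly executed. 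Your sketch of the warped-product bullet is also essentially the right mechanism (the flow of $X$ preserves $X^\perp$ since $\langle[X,Y],X\rangle=0$ for $Y\perp X$, and $\mathcal{L}_X\bar g=2\phi\bar g$ drives the warping; your remark about rescaling the flow parameter to arclength is the right way to reconcile $X=\varrho\,\partial_t$ with the flow of $X$ itself).

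The one place where your argument, as written, has a genuine gap is the zero-count. Saying that near a zero $p$ the identity forces $X$ to be ``the radial field scaled by $\phi(p)$'' is only a statement about the linearization of $X$ at $p$: it isolates the zero when $\phi(p)\neq 0$, but gives nothing when $\phi(p)=0$ (a better local statement: along any geodesic $\gamma$ from $p$ the component of $X$ orthogonal to $\gamma'$ is parallel, hence vanishes, so $X$ is radial with $\langle X,\gamma'\rangle=\int_0^t\phi\circ\gamma$, and one must still rule out degenerate behavior when $\phi$ vanishes at $p$). More importantly, ``at most two zeroes'' is a global assertion and does not follow from any local normal-form argument plus the heuristic that an interval has two endpoints; it requires the global analysis of the completion of the warped-product region. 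You concede this and defer to Montiel, which is legitimate --- and indeed exactly what the paper itself does for the entire theorem --- but be aware that this bullet is the part of your proposal that is a citation rather than a proof.
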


\section{Examples in Euclidean spaces}\label{sec:examples}

In this section we will give parametrizations of canonical principal direction hypersurfaces in Euclidean spaces.

\begin{ejemplo}
Consider the case of $\bar M=\mathbb R^3$. We will give a parametrization of a surface $M$ with canonical principal direction relative to a unit constant vector field $X_0$.

Fix a plane $N$ orthogonal to $X_0$, $\gamma=\gamma(s)$ a curve in $N$ parameterized by arc length and $\eta$ a unit vector field in $\mathfrak{X}(N)$ normal to $\gamma$. Fix a curve $\beta(t)=(f(t),g(t))$ in a $2$-dimensional plane, also parameterized by arc length. Consider the surface $M$ parameterized by
\begin{equation}\label{eq:curvascongruentes}
\varphi(s,t)=\gamma(s)+f(t)\eta(s)+g(t)X_0.
\end{equation}

The partial derivatives $\varphi_t,\varphi_s$ are given by
\[
\varphi_t=f'(t)\eta(s)+g'(t)X_0
\]
and
\[
\varphi_s=\gamma'(s)+f(t)\eta'(s)=(1-f(t)\kappa(s))\gamma'(s),
\]
where $\kappa(s)$ is the curvature of $\gamma$ at $s$. We have used the Serret-Frenet formulae and the fact that $\gamma$ is a planar curve. Note that $\varphi_t$ and $\varphi_s$ are orthogonal. Taking the unit vectors along these directions, we calculate the projection of $X_0$ onto the tangent plane of $M$ as
\[
\langle X_0,\varphi_t\rangle\varphi_t+\langle X_0,\gamma'(s)\rangle\gamma'(s)=g'(t)\varphi_t.
\]

We will suppose that $g'(t)$ does not vanish and prove that $\varphi_t$ is a principal direction of $M$.
Note that a unit vector field $\xi $ normal to $M$ is given by
\[
\xi (s,t)=-g'(t)\eta(s)+f'(t)X_0,
\]
and its partial derivative with respect to $t$ is
\[
\xi _t=-g''(t)\eta(s)+f''(t)X_0=\frac{f''(t)}{g'(t)}(f'(t)\eta(s)+g'(t)X_0)=\frac{f''(t)}{g'(t)}\varphi_t,
\]
where we have used the fact that $f'f''+g'g''=0$. The above equation implies that $\varphi_t$ is a principal direction of $M$.

The above calculation gives an expression for the principal curvature of $M$ in the direction of $\varphi_t$. As for the other principal curvature, note that
\[
\xi _s=-g'(t)\eta'(s)=g'(t)\kappa(s)\gamma'(s).
 \]

Hence, the principal curvatures $\lambda,\mu$ of a surface $M$ parameterized by (\ref{eq:curvascongruentes}) are given by
\[
\lambda=\frac{f''(t)}{g'(t)}\quad\mathrm{and}\quad \mu=\frac{g'(t)\kappa(s)}{1-f(t)\kappa(s)}.
\]

For example, consider that $\gamma(s)$ is a unit circle ($\kappa\equiv 1$) and the additional restriction of $M$ being a minimal surface; i.e., $\lambda+\mu=0$. It is easy to check that the functions $f,g$ satisfying these conditions are given by
\[
f(t)=\cosh(\sinh^{-1}(t))+1\quad \mathrm{and}\quad g(t)=\sinh^{-1}(t),
\]
which is the arc length parametrization of the catenary. Then $M$ is the standard catenoid in $\mathbb R^3$.
\end{ejemplo}

In fact, we may generalize the above example to any dimension as follows.

\begin{ejemplo}
Let $X$ be a closed conformal vector field in $\mathbb R^{n+1}$ and $N$ a hypersurface everywhere orthogonal to $X$, whose existence is guaranteed by Theorem \ref{teo:montiel}. Let $L$ be a $(n-1)$-dimensional orientable hypersurface of $N$ parameterized by $\varphi=\varphi(x_1,\dots,x_{n-1})$ and let $\eta$ be a unit vector field normal to $L$. Define a hypersurface $M^n$ of $\mathbb R^{n+1}$ by the parametrization
\[
\Phi(x,t)=\varphi(x)+f(t)\eta(x)+g(t)\widehat X(x),\quad \widehat X=X/\vert X\vert,
\]
where $x=(x_1,\dots,x_{n-1})$ and $(f(t),g(t))$ is a planar curve parameterized by arc length. The partial derivatives of the above expression are
\begin{eqnarray*}
\Phi_t & = & f'(t)\eta+g'(t)\widehat X,\\
\Phi_i & = & \varphi_i+f(t)\eta_i+g(t)\widehat X_i = \varphi_i+f(t)\eta_i+\frac{g(t)\phi}{\vert X\vert}\varphi_i,
\end{eqnarray*}
where $i=1,\dots,n-1$ and all subindices denote partial derivatives. We have used the fact that $X$ is closed conformal. Since $\langle X,\varphi_i\rangle=0$, we have
\[
\langle X,\Phi_i\rangle = f(t)\langle X,\eta_i\rangle = -f(t)\langle X_i,\eta\rangle = -f(t)\phi\langle \varphi_i,\eta\rangle = 0
\]
for each $i=1,\dots,n-1$, which means that the projection of $X$ into the tangent space of $M$ lies in the direction of $\varphi_t$. This projection vanishes iff $g'(t)=0$, so we will suppose $g'(t)\ne 0$ everywhere. Now, a unit vector field $\xi $ normal to $M$ is given by
\[
\xi =-g'(t)\eta+f'(t)\widehat X.
\]

As in the previous example, we have
\[
\xi _t=\frac{f''(t)}{g'(t)}\varphi_t,
\]
which means that $\varphi_t$ is a principal direction, implying in turn that the tangent part of $X$ determines a principal direction.
\end{ejemplo}

\newpage

\section{Characterization theorem}\label{sec:main}

In this section we state and prove our main result, giving different characterizations of the hypersurfaces with a canonical principal direction.

We will suppose that $\bar M^{n+1}$ is a Riemannian manifold is endowed with a non-null closed conformal vector field $X$ and $M$ is an orientable hypersurface of $\bar M$ with a normal unit vector field $\xi$ making a (not necessarily constant) angle $\theta\in (0,\pi)$ with $X$.

By Theorem \ref{teo:montiel}, locally $\bar M$ is isometric to $I\times_\varrho N$. In this case, we denote by $h:M\to\mathbb R$ the height function of $M$, i.e., the restriction of the projection $\pi:I\times_\varrho N\to I$ to $M$. In a region $U\subseteq N$ where the angle $\theta\ne\pi/2$, we may suppose further that $M$ is given as the graph of a function $F:U\to I$:
\[
M=\{\ (F(x),x)\ \vert \ x\in U\ \}.
\]

Note that our definition of a graph use the order of the factors according to the standard use of the notation $I\times_\varrho N$ for warped products.

\begin{teorema}\label{teo:teoremon} Let $\bar M^{n+1}$ be a Riemannian manifold admitting a non-null closed conformal vector field $X$, and $M$ an orientable hypersurface of $\bar M$. Using the notations above defined as well as those of Section \ref{sec:notacion}, the following statements are equivalent:
\begin{enumerate}
\item\label{itemcpd} $M$ has a canonical principal direction relative to $X$, i.e., $T$ is a principal direction.
\item \label{angulo} The angle $\theta$ between $X$ and $\xi$ is constant along the directions tangent to $M$ and orthogonal to $T$.
\end{enumerate}
In addition, if we consider an open subset of $\bar M$ isometric to a warped product $I\times_\varrho N$ and the angle $\theta\ne\pi/2$, the above conditions are equivalent to the following:
\begin{enumerate}
\item[3.] The integral curves of $T$ are geodesics in $M$.
\item[4.] The norm of the gradient of $h$ is constant along the level curves of $h$.
\item[5.] The norm of the gradient of $F$ is constant along the level curves of $F$.
\end{enumerate}
\end{teorema}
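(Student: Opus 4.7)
The plan is to prove the block (1)$\Leftrightarrow$(2)$\Leftrightarrow$(3) intrinsically from the closed conformal identity, and then use the Montiel structure $\bar M\cong I\times_\varrho N$ to translate (2) into the transnormal conditions (4) and (5).

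For (1)$\Leftrightarrow$(2), I would take any $Z\in TM$ with $\langle Z,T\rangle=0$ and differentiate the identity $\langle X,\xi\rangle=|X|\cos\theta$ along $Z$. The relation $\bar\nabla_Y|X|^2=2\phi\langle Y,X\rangle$ together with $\langle Z,X^T\rangle=0$ gives $Z|X|=0$, so only $\cos\theta$ contributes. On the other hand, applying $\bar\nabla_Z X=\phi Z$ and the Weingarten equation, and using $\langle Z,\xi\rangle=0$, yields $Z\langle X,\xi\rangle=-|X^T|\langle A_\xi T,Z\rangle$. Comparing the two expressions I obtain $Z\theta=\langle A_\xi T,Z\rangle$ up to a sign. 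Hence the vanishing of $Z\theta$ for every such $Z$ is exactly the statement that $A_\xi T$ is proportional to $T$, i.e.\ that $T$ is a principal direction.

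For (1)$\Leftrightarrow$(3), I would expand $\bar\nabla_T X=\phi T$ by writing $X=|X^T|T+\langle X,\xi\rangle\xi$ and applying Gauss--Weingarten; the tangential component reads
\begin{equation*}
|X^T|\,\nabla_T T=(\phi-T|X^T|)\,T+\langle X,\xi\rangle\,A_\xi T.
\end{equation*}
Since $\nabla_T T\perp T$, projecting onto $T^\perp\subset TM$ leaves
\[
|X^T|\,\nabla_T T=\langle X,\xi\rangle\,(A_\xi T-\langle A_\xi T,T\rangle T).
\]
Under the assumption $\theta\neq\pi/2$ one has $\langle X,\xi\rangle\neq 0$, so $\nabla_T T=0$ if and only if $A_\xi T$ has no component orthogonal to $T$, which is precisely the principal direction condition.

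For (4) and (5), I would invoke Theorem \ref{teo:montiel} to write $X=\varrho\,\partial_t$ on $I\times_\varrho N$. The $\bar M$-gradient of the $I$-projection is $\partial_t$, so the $M$-gradient of $h$ equals $\partial_t^T=X^T/\varrho$; in particular $T$ is positively proportional to $\nabla h$, the level sets of $h$ in $M$ coincide with the integral submanifolds of $T^\perp$, and $|\nabla h|=\sin\theta$. Since $\theta\in(0,\pi)$, condition (4) is therefore equivalent to $\theta$ being constant along those level sets, i.e.\ to (2). For (5), parametrizing $M$ as the graph of $F:U\to I$ and solving the orthogonality conditions for the unit normal $\xi$ gives $\cos\theta=\varrho/\sqrt{\varrho^2+|\nabla F|_N^2}$, whence $|\nabla F|_N^2=\varrho^2\tan^2\theta$. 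Because $\varrho=\varrho(F)$ is automatically constant on level sets of $F$ and the projection $(F(x),x)\mapsto x$ identifies level sets of $h$ on $M$ with level sets of $F$ on $U$, (5) reduces to the same angular condition, i.e.\ (2).

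The main technical care I expect is in the decomposition step leading to (1)$\Leftrightarrow$(3): one must exploit $\nabla_T T\perp T$ to strip off the apparently problematic term $(\phi-T|X^T|)T$, and recognize that the surviving scalar $\langle X,\xi\rangle$ is the very factor that fails precisely at $\theta=\pi/2$, explaining why statements (3)--(5) are only claimed under that restriction. Everything else is a clean transcription between the intrinsic geometry of $M$, the angle function $\theta$, and the warped product data.
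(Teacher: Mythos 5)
Your proposal is correct. For (1)$\Leftrightarrow$(2) and the link with (3) you essentially reproduce the paper's argument: the paper differentiates the decomposition $X=|X|\left((\sin\theta)T+(\cos\theta)\xi\right)$ using $\bar\nabla_ZX=\phi Z$ and reads off the normal component to get $\langle A_\xi T,Z\rangle=Z(\theta)$ for $Z\perp T$, then uses the tangential component at $Z=T$ to relate $\nabla_TT$ with $A_\xi T$; your differentiation of the scalar $\langle X,\xi\rangle=|X|\cos\theta$ and of $\bar\nabla_TX=\phi T$ with $X=|X^T|T+\langle X,\xi\rangle\xi$ is the same computation repackaged, including the correct observation that only $(3)\Rightarrow(1)$ needs the factor $\langle X,\xi\rangle\neq 0$, i.e. $\theta\neq\pi/2$, while $|X^T|=|X|\sin\theta\neq 0$ is automatic. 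Where you genuinely diverge is in items (4) and (5): the paper computes $\nabla h$ in the graph frame, derives $|\nabla h|^2=|\nabla F|^2/(|\nabla F|^2+(\varrho\circ F)^2)$ to get $(4)\Leftrightarrow(5)$, and then proves $(3)\Leftrightarrow(4)$ via $T=\nabla h/|\nabla h|$ and the identity (\ref{derivada-de-la-norma}); you instead observe that $\nabla h=\partial_t^T$, hence $|\nabla h|=\sin\theta$, and that $|\nabla F|^2=(\varrho\circ F)^2\tan^2\theta$ (the formula the paper only exploits later, in Propositions \ref{graph-of-transnormal} and \ref{converse graph-of-transnormal}), so that both (4) and (5) collapse directly onto the angle condition (2). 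This closes the cycle more economically, bypassing the computation behind (\ref{derivada-de-la-norma}), and makes transparent that $\theta\neq\pi/2$ is the only extra hypothesis needed. One small wording point: in passing from constancy of $\sin\theta$ (resp.\ $\tan^2\theta$) along the level sets back to constancy of $\theta$, the honest justification is $\cos\theta\neq 0$, i.e.\ the standing assumption $\theta\neq\pi/2$ (or connectedness of the level sets plus continuity), rather than merely $\theta\in(0,\pi)$; since that hypothesis is in force for items (3)--(5), this is harmless.
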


\begin{proof}

We decompose $X$ as
\[
X =\vert X \vert\left((\sin\theta)T+(\cos\theta)\xi\right).
\]

By differentiating the above with respect to a vector field $Z\in\mathfrak{X}(M)$, we obtain
\begin{eqnarray*}
\bar{\nabla}_Z X & = &  Z(|X|) \left[(\sin\theta)T+(\cos\theta)\xi\right] \\[0.2cm]&& +
|X|\left[Z(\sin\theta)T+(\sin\theta)\bar{\nabla}_ZT+Z(\cos\theta)\xi+(\cos\theta) \bar{\nabla}_Z\xi  \right].
\end{eqnarray*}

The left hand side of this equation is equal to $\phi Z$, since $X$ is closed conformal. Taking the tangent and normal components, we have
\begin{equation}\label{eq:tangente}
\phi Z = Z(|X|)(\sin\theta)T+
|X|\left[Z(\sin\theta)T+(\sin\theta)\nabla_ZT-(\cos\theta) A_\xi Z\right]
\end{equation}
and
\begin{equation}\label{eq:normal}
0 = Z(|X|)(\cos\theta)\xi +
|X|\left[(\sin\theta)\alpha(Z,T)+Z(\cos\theta)\xi\right].
\end{equation}

\medskip

Let $Z\in\mathfrak{X}(M)$ be orthogonal to $T$. Hence, $Z$ is also orthogonal to $X$ and by Theorem \ref{teo:montiel}, $Z(|X|)=0$. We use this fact and take the scalar product of (\ref{eq:normal}) with $\xi$ to obtain
\[
0 = (\sin\theta)\langle \alpha(Z,T), \xi\rangle +Z(\cos\theta)=(\sin\theta)\left[\langle\alpha(Z,T),\xi\rangle-Z(\theta)\right].
\]

Since $\theta\in(0,\pi)$, $\sin\theta\ne 0$ and we obtain
\[
\langle A_\xi T, Z \rangle = \langle \alpha(Z,T) , \xi \rangle=Z(\theta).
\]

Let us use these calculations to prove some of the implications announced in the statement of the theorem.

\medskip

Item \ref{itemcpd} implies item \ref{angulo}: If $T$ is a principal direction, there exists a $\lambda$ such that $A_\xi T=\lambda T$, and then the last expression implies that $Z(\theta)=\lambda\langle Z,T\rangle=0$ for every
$Z$ such that $ \langle Z,T \rangle =0$, which in turn implies that the angle $\theta$ is constant along such directions.

\medskip

Item \ref{angulo} implies item \ref{itemcpd}: If $Z(\theta)=0$ for each vector field $Z\in\mathfrak{X}(M)$ orthogonal to $T$, the last expression implies that $A_\xi T$ has no components orthogonal to $T$, and hence there exist $\lambda$ such that $A_\xi T=\lambda T$. 

\medskip

Item \ref{itemcpd} implies item 3: Suppose that there exists $\lambda$ such that $A_\xi T=\lambda T$. The expression (\ref{eq:tangente}) for $Z=T$ shows that $\nabla_TT$ is a scalar multiple of $T$, because $\sin\theta\ne 0$. But as $T$ is a unit vector field, $\langle \nabla_TT,T\rangle=0$, which means that $\nabla_TT=0$ and the integral curves of $T$ are geodesics in $M$.

\medskip

From now on we suppose that $\theta\ne\pi/2$, so that $\cos\theta\ne 0$.

\medskip

Item 3 implies item \ref{itemcpd}: Suppose that $\nabla_TT=0$. From (\ref{eq:tangente}) (for $Z=T$) and the fact that $\cos\theta\ne 0$ we conclude that one may express $A_\xi T$ as a scalar multiple of $T$. This means that $T$ is a principal direction.

\medskip


Now we will work in the warped product $I\times_\varrho N$. As usual, we will use the same notation for vector fields in every factor of this product and its corresponding liftings. Hence, if $t$ denotes a standard coordinate system on $I$, then $\partial_t$ denotes indistinctly the vector field tangent to $I$ and its corresponding lifting to $\bar M$.
Since $M$ is considered here as the graph of a function $F:N\to I$, a frame field tangent to $M$ is given by
\[
E_i=\frac{\partial F}{\partial x_i}\partial_t+e_i,\quad i=1,\dots,n.
\]
where $e_i$ denotes the lifting to $\bar M$ of a $n$-frame tangent to $N$. It is straightforward to check that the vector field defined by
\[
\xi =(\varrho\circ F)^2\partial_t-\nabla F\]
is normal to $M$. Now the height function $h:M\to\mathbb R$ is given by $h(F(x),x)=F(x)$. (Incidentally, this expression shows that each level curve of $h$ corresponds exactly to a level curve of $F$.) Since
\[
\langle \nabla h, E_i\rangle=E_i(h)=e_i(F)=\frac{\partial F}{\partial x_i} = \left\langle \partial_t,E_i \right\rangle = \left\langle \partial_t^T,E_i \right\rangle,
\]
the gradient $\nabla h$ of the height function is precisely the component of $\partial_t$ tangent to $M$. This component can be calculated as
\[
\partial_t-\frac{\left\langle \partial_t,\xi \right\rangle}{\langle \xi , \xi  \rangle }\xi =\frac{1}{\vert\nabla F\vert^2+(\varrho\circ F)^2}\left(\vert\nabla F\vert^2\partial_t+\nabla F\right).
\]

In other words, $\nabla h$ and $\nabla F$ are related by
\[
\nabla h =\frac{1}{\vert\nabla F\vert^2+(\varrho\circ F)^2}(\vert\nabla F\vert^2\partial_t+\nabla F).
\]

Hence, the relation between $\vert\nabla h\vert$ and $\vert\nabla F\vert$ is
\[
\vert\nabla h\vert^2=\frac{\vert\nabla F\vert^2}{\vert\nabla F\vert^2+(\varrho\circ F)^2}.
\]

Conversely, we may express $\vert\nabla F\vert$ in terms of $\vert\nabla h\vert$:
\[
\vert\nabla F\vert^2=\frac{(\varrho\circ F)^2\vert\nabla h\vert^2}{1-\vert\nabla h\vert^2},
\]

We will prove now the remaining claims in the theorem.

\medskip

Items 4 and 5 are equivalent: Take a level curve of $F$, which as pointed out before, corresponds precisely to a level curve of $h$. From the above expressions and the fact that $\varrho\circ F$ is constant along such a curve it is clear that $\vert\nabla F\vert^2$ is constant along the level curves of $F$ iff $\vert\nabla h\vert^2$ is constant along the level curves of $h$.

\medskip

To finish the proof, we prove the equivalence between items 3 and 4. Note, from the above considerations, that $T=\nabla h/\vert\nabla h\vert$. Now, $\nabla_TT=0$ is equivalent to
\[\nabla h\left(\frac{1}{|\nabla h|}\right) \nabla h + \frac{1}{\vert\nabla h\vert} \nabla_{\nabla h} \nabla h = 0.\]

In short, $\nabla_TT=0$ if and only if
$\nabla_{\nabla h} \nabla h$ is an scalar multiple of $\nabla h$.
For every $Y \in \mathfrak{X}(M)$ such that $ \langle Y, \nabla h \rangle = 0 $ we have
\begin{equation}
\label{derivada-de-la-norma}
Y |\nabla h|^2 = 2 \langle \nabla_Y \nabla h, \nabla h \rangle = 2\langle \nabla_{\nabla h} \nabla h, Y \rangle.
\end{equation}

Hence, $\nabla_{\nabla h} \nabla h$ is an scalar multiple of $\nabla h$ if and only if $Y |\nabla h|^2 = 0$ for every such $Y$, which happens if and only if $|\nabla h|$ is constant along the level curves of $h$.
\end{proof}

\section{Relationship with transnormal functions}\label{sec:construction}

Here we give explicit parametrizations of hypersurfaces $M$ with a canonical principal direction in a warped product $\bar
M^{n+1}=\mathbb R\times_\varrho N$. Briefly, we will prove that a
graph of a function $F:N\to\mathbb R$ has a canonical principal direction if $F$ satisfies a condition on the norm of its
gradient given in the following definition.

\begin{definicion} \label{defeikonal}
\em
Let $N$ be a Riemannian manifold and $F:N\rightarrow \mathbb R$ a differentiable function. We say that $F$
is a {\em transnormal function} if it satisfies the {\em
generalized eikonal equation}
\begin{equation}\label{eq:eikonal}
\vert\nabla F\vert=b\circ F,
\end{equation}
where $b$ is a non-negative function.
\end{definicion}

We recall that the concept of transnormal function is related to that of an isoparametric function. An isoparametric function is a transnormal function
that also satisfies the condition $\Delta F = a \circ F$, where
$a$ is a smooth function. It is well known that Cartan investigated such functions on
space forms; see \cite{MR1990032} and \cite{MR0901710} for more
details.

\medskip

In our following results we give the relation between the
transnormal functions and the hypersurfaces with a canonical principal direction.

\begin{proposicion}
\label{graph-of-transnormal}
Let $\bar{M}^{n+1}$ be the warped product $\mathbb R\times_\varrho N$. The graph of a transnormal function $F:N\to \mathbb R$ has a canonical principal direction relative to the vector field $\partial_t$.
\end{proposicion}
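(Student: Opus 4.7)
The plan is to deduce the proposition as a direct application of Theorem \ref{teo:teoremon}, specifically the equivalence between items 1 and 5.

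Step 1 is to identify the ambient closed conformal vector field so that Theorem \ref{teo:teoremon} can be invoked. By Theorem \ref{teo:montiel}, in $\bar{M} = \mathbb{R}\times_\varrho N$ this vector field is $X = \varrho\,\partial_t$. Since the tangential projection of $\partial_t$ onto $TM$ is a positive scalar multiple of that of $X$, the hypersurface has a canonical principal direction relative to $\partial_t$ if and only if it has one relative to $X$. Hence I may apply the main theorem with $M$ equal to the graph of $F$.

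Step 2 is to check that the ``graph'' block of Theorem \ref{teo:teoremon} is available, i.e.\ that $\theta \neq \pi/2$. The normal field of the graph, already written down in the proof of the main theorem, is $\xi = (\varrho\circ F)^2\partial_t - \nabla F$. Consequently $\langle \partial_t,\xi\rangle = (\varrho\circ F)^2 \neq 0$ on all of $M$, so $\cos\theta$ never vanishes. (At points where $\nabla F = 0$ one also has $X^T = 0$, so the canonical principal direction statement is vacuous there, and we may restrict to the open set where $\nabla F \neq 0$.)

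Step 3 is where the transnormal hypothesis enters. The equation $|\nabla F| = b\circ F$ says precisely that $|\nabla F|$ factors through $F$, so on every level set $\{F = c\}$ the quantity $|\nabla F|$ equals the constant $b(c)$. This is exactly item 5 of Theorem \ref{teo:teoremon}, which then gives item 1, i.e.\ the canonical principal direction property relative to $X$ and therefore relative to $\partial_t$. There is essentially no real obstacle to overcome; the characterization theorem does all the work, and the definition of a transnormal function has been engineered to match item 5 verbatim.
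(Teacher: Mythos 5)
Your proof is correct, and it reaches the conclusion through the same basic strategy as the paper, namely reducing everything to Theorem \ref{teo:teoremon}; the only real difference is which equivalent condition you verify. The paper checks item \ref{angulo}: it computes the angle explicitly, $\cos\theta=(\varrho\circ F)/\sqrt{(\varrho\circ F)^2+\vert\nabla F\vert^2}$ (equation (\ref{eq:angulo})), and observes that the transnormal hypothesis makes $\cos\theta$ a function of $F$ alone, hence constant on level curves; this route needs no hypothesis on $\theta$, and the angle formula is then reused in the converse statement, Proposition \ref{converse graph-of-transnormal}. You instead verify item 5, which is indeed verbatim the transnormal condition, so no computation is needed; the price is that the graph block of the theorem requires $\theta\ne\pi/2$, and you correctly supply this by noting $\langle\partial_t,\xi\rangle=(\varrho\circ F)^2>0$ for the graph's normal field. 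Your remarks that the property relative to $\partial_t$ is the same as relative to $X=\varrho\,\partial_t$, and that one should restrict to the set where $\nabla F\ne 0$ (so that $T$ is defined), address points the paper passes over silently; neither creates a gap.
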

\begin{proof}
We denote by $\nabla F$ the lift to $\bar{M}$ of the gradient of $F$. The vector field $\xi$ everywhere normal to the
graph of $F$ is given by
\[
\xi=(\varrho\circ F)^2\partial_t-\nabla F.
\]
By Theorem \ref{teo:teoremon} we only need to analyze the angle $\theta$ between $\partial_t$ and $\xi$ along the level curves of $F$, given by
\begin{equation}\label{eq:angulo}
\cos\theta=\left\langle \frac{\xi}{\vert\xi\vert},
\partial_t\right\rangle  = \frac{\varrho\circ F}{\sqrt{(\varrho\circ F)^2+\vert\nabla F\vert^2}}.
\end{equation}

If $F$ is transnormal, substituting (\ref{eq:eikonal}) in (\ref{eq:angulo}) we obtain that $\cos\theta$ has the form $g\circ F$ and hence $\theta$ is constant along the level curves of $F$. By Theorem \ref{teo:teoremon}, we have that the graph of $F$ has a canonical principal direction.\end{proof}

In order to prove the converse of this result, we impose a natural additional condition. The statement uses the notation of the above Proposition and its proof.

\begin{proposicion}
\label{converse graph-of-transnormal}
Let $\bar{M}^{n+1}$ be the warped product $\mathbb R\times_\varrho N$. If the graph of a function $F:N\to \mathbb R$ has a canonical principal direction relative to the vector field $\partial_t$ and the angle $\theta$ between $\partial_t$ and the vector field $\xi$ normal to the graph is everywhere different from $\pi/2$, then $F$ is transnormal.
\end{proposicion}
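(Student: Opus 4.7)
The plan is to observe that this proposition is essentially a direct translation of item 5 of the main characterization theorem into the language of Definition \ref{defeikonal}. Because the angle $\theta$ is assumed everywhere different from $\pi/2$, Theorem \ref{teo:teoremon} applies in full, so its item (\ref{itemcpd}) is equivalent to item 5. The hypothesis that the graph of $F$ has a canonical principal direction relative to $\partial_t$ is exactly item (\ref{itemcpd}); hence item 5 holds, namely \emph{the norm of the gradient of $F$ is constant along the level sets of $F$}.

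\medskip

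From this information I would build the function $b$ appearing in the generalized eikonal equation by setting, on the image $F(N)\subseteq\mathbb R$,
\[
b(c) := |\nabla F|(x),\qquad x\in F^{-1}(c).
\]
The definition does not depend on the choice of $x\in F^{-1}(c)$, precisely because item 5 of Theorem \ref{teo:teoremon} guarantees that $|\nabla F|$ is constant on that fibre. Since $|\nabla F|\geq 0$, the function $b$ is non-negative; extending it by zero off $F(N)$ produces a non-negative function defined on all of $\mathbb R$ for which $|\nabla F|(x)=b(F(x))$ holds pointwise. This is exactly equation (\ref{eq:eikonal}), so $F$ is transnormal by Definition \ref{defeikonal}.

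\medskip

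Essentially all the work has been done in Theorem \ref{teo:teoremon}; the present proposition is little more than a repackaging of the implication (\ref{itemcpd})$\Rightarrow$5 as a scalar identity in the variable $F$. Consequently I do not foresee a serious obstacle. The only point that requires any attention at all—the main step, such as it is—is that ``constant along the level curves of $F$'' really says that $|\nabla F|$ depends only on the \emph{value} $F(x)$, not on the point $x$ itself; this is exactly what makes the assignment $c\mapsto b(c)$ above well defined, and it is precisely what item 5 provides.
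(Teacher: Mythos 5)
Your argument is correct and, like the paper's, it is ultimately just an application of Theorem \ref{teo:teoremon}; but the route through the theorem is slightly different, and the difference is worth noting. You invoke item 5 directly ($|\nabla F|$ is constant along the level sets of $F$, available because $\theta\ne\pi/2$ makes items 1--5 equivalent) and then define $b$ abstractly, fibrewise, by $b(c)=|\nabla F|(x)$ for $x\in F^{-1}(c)$. The paper instead never mentions item 5: it solves the angle relation (\ref{eq:angulo}) for the gradient, obtaining $|\nabla F|^2=(\tan^2\theta)(\varrho\circ F)^2$, uses the constancy of $\theta$ along the level sets (the content of item \ref{angulo} read on the graph) to define $g(F(p))=|\tan\theta(p)|$, and sets $b=g\cdot\varrho$. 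What the paper's explicit construction buys is regularity: since $\theta$ avoids $0$ and $\pi/2$, the function $g$, hence $b$, is differentiable, which is the regularity one wants for a transnormal function and which is actually used later (Propositions \ref{proposicion:solution-eikonal} and \ref{prop:local-solutions} take $b$ differentiable and positive). Your $b$ satisfies the letter of Definition \ref{defeikonal} (only non-negativity is demanded there), but as constructed it comes with no smoothness, and extending it by zero off $F(N)$ is both unnecessary and potentially destroys continuity at the boundary of the image; better to leave $b$ defined on $F(N)$ and, if regularity is desired, observe that $\nabla F$ never vanishes (because $\theta\ne 0,\pi$), so $F$ is a submersion and $b$ is smooth via local sections. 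Finally, both your proof and the paper's implicitly treat $|\nabla F|$ (resp.\ $\theta$) as constant on whole fibres $F^{-1}(c)$ rather than only on their connected components, so on that point you are exactly at the paper's level of rigor.
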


\begin{proof}
Solving for $\vert\nabla F\vert^2$ in equation (\ref{eq:angulo}) we obtain
\[
\vert\nabla F\vert^2=(\tan^2\theta)(\varrho\circ F)^2.
\]

Suppose that the graph of $F$ has a canonical principal direction relative to $\partial_t$. Then the angle $\theta\in(0,\pi)$ is constant along the level curves of $F$. At the points in the image of $F$ we define the real-valued function $g$  as
\[
g(F(p))=\vert\tan\theta(p)\vert;
\]
this function is well-defined and differenciable, since $\theta$ is both different from $0$ and $\pi/2$. We may write then $\vert\nabla F\vert = b\circ F$, with $b=g\cdot \varrho$, i.e., $F$ is transnormal. \end{proof}

In our next result we give a solution of (\ref{eq:eikonal}) and use it to give to build a canonical principal direction hypersurface.

\begin{proposicion}
\label{proposicion:solution-eikonal} Let $N$ be a Riemannian
manifold, $L\subset N$ an orientable hypersurface of $N$ and $L_\epsilon$ a tubular neighborhood of $L$ such that the
distance function $d$ to $L$ is well-defined in $L_\epsilon$ and is
differentiable in $L_\epsilon\setminus L$. Also, let $b:I\to\mathbb R$ be a differentiable
positive function and define a real
valued, invertible function $h:I\to\mathbb R^+$ by
\begin{equation}\label{eq:h-inversa}
h^{-1}(s)=\int_{s_0}^s\frac{d\sigma}{b(\sigma)}.
\end{equation}
Then, $F=h\circ d$ satisfies $(\ref{eq:eikonal})$ in
$L_\epsilon\setminus L$.
\end{proposicion}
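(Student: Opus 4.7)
The plan rests on two standard facts: the distance function to a smooth hypersurface has unit gradient on its smooth locus, and the inverse function theorem connects the derivative of $h$ to $b$ through the defining relation. Once both are in place, the verification of the eikonal equation is essentially a one-line chain rule computation.

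First I would recall the geometric input: on the tubular neighborhood $L_\epsilon\setminus L$, the distance function $d$ is smooth and satisfies $|\nabla d|=1$ (its integral curves are unit-speed geodesics meeting $L$ orthogonally). This is exactly what makes $d$ itself a solution of the eikonal equation with $b\equiv 1$; our task is to rescale it so that the right-hand side becomes $b\circ F$.

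Next I would analyze $h$. Since $b$ is differentiable and positive, the integrand $1/b$ is differentiable and positive, so the right-hand side of
\[
h^{-1}(s)=\int_{s_0}^s\frac{d\sigma}{b(\sigma)}
\]
defines a strictly increasing differentiable function of $s$, hence smoothly invertible; its derivative is $(h^{-1})'(s)=1/b(s)$. By the inverse function theorem,
\[
h'(t)=\frac{1}{(h^{-1})'(h(t))}=b(h(t))>0.
\]

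Finally I would apply the chain rule to $F=h\circ d$ on $L_\epsilon\setminus L$:
\[
\nabla F=h'(d)\,\nabla d,\qquad |\nabla F|=|h'(d)|\,|\nabla d|=h'(d)=b(h(d))=b(F),
\]
which is exactly (\ref{eq:eikonal}). I do not anticipate any real obstacle; the only points needing care are the positivity of $h'$ (so the absolute value may be dropped) and the tacit check that the composition $h\circ d$ lands in the domain of $b$, both of which follow from the positivity of $b$ and the monotonicity of $h^{-1}$.
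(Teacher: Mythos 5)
Your proof is correct and follows essentially the same route as the paper: both invoke $|\nabla d|=1$ on $L_\epsilon\setminus L$ and then compute $|\nabla F|=(h'\circ d)|\nabla d|=b\circ F$ via the chain rule and the inverse function theorem applied to $h^{-1}$. Your extra remarks on the positivity of $h'$ and the invertibility of $h^{-1}$ merely make explicit what the paper leaves tacit.
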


\begin{proof} It is well-known that the distance function $d$ satisfies $\vert\nabla d\vert=1$ in $L_\epsilon\setminus L$; then,
\begin{eqnarray*}
\vert\nabla F\vert & = & \vert\nabla(h\circ d)\vert =(h'\circ
d)\vert\nabla d\vert = h'\circ d \\ & = &
\frac{1}{(h^{-1})'(h\circ d)}=b\circ h\circ
d=b\circ F,
\end{eqnarray*}
which proves our claim. \end{proof}

Now we will analyze the uniqueness question
related with the construction of the solutions of (\ref{eq:eikonal}) given in the last Proposition.

\begin{proposicion}
\label{prop:local-solutions}
Let $b$ be a differentiable positive function and $F:N\to\mathbb R$ a solution of $(\ref{eq:eikonal})$. Then $F$ is given locally as in Proposition
$\ref{proposicion:solution-eikonal}$.
\end{proposicion}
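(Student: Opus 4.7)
The plan is to reconstruct the pair $(L, h)$ appearing in Proposition \ref{proposicion:solution-eikonal} directly from the given solution $F$. Since $b$ is positive, the equation $|\nabla F| = b\circ F$ guarantees that $\nabla F$ never vanishes, so $F$ has no critical points. I would fix a point $p_0$, set $c_0 = F(p_0)$, and take $L$ to be the level hypersurface $F^{-1}(c_0)$ in a neighborhood of $p_0$ (oriented by $\nabla F/|\nabla F|$). With the choice $s_0 = c_0$ in the formula for $h^{-1}$, the goal reduces to proving that $F = h\circ d$ on a tubular neighborhood $L_\epsilon$ of $L$.

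The crucial geometric step is to identify the integral curves of the unit vector field $T = \nabla F/|\nabla F|$ with the normal geodesics to $L$. Such curves are automatically unit-speed and orthogonal to every level set of $F$ (in particular to $L$); the content lies in checking that they are geodesics, i.e.\ that $\nabla_T T = 0$. I would extract this from the identity
\[
\nabla_{\nabla F}\nabla F = b(F)\,b'(F)\,\nabla F,
\]
which follows from differentiating $|\nabla F|^2 = b(F)^2$ in an arbitrary direction $Y$ and using the symmetry of the Hessian $\langle\nabla_Y\nabla F,\nabla F\rangle = \langle\nabla_{\nabla F}\nabla F,Y\rangle$. A short computation with the product rule in $T = \nabla F/b(F)$ then yields $\nabla_T T = 0$. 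Being unit-speed geodesics leaving $L$ perpendicularly, the trajectories of $T$ must coincide with the normal geodesics to $L$, and consequently the arc-length parameter along them agrees with the distance function $d$ on $L_\epsilon\setminus L$.

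Once this identification is in place, the remaining step is a one-dimensional calculation: along a trajectory of $T$ parameterized by arc length $s$, the eikonal equation gives $dF/ds = \langle\nabla F, T\rangle = |\nabla F| = b(F)$. Separating variables and integrating from $L$ produces
\[
s \;=\; \int_{c_0}^{F}\frac{d\sigma}{b(\sigma)} \;=\; h^{-1}(F),
\]
and the identification $s = d$ yields $F = h\circ d$ on $L_\epsilon$, as required. The main obstacle I expect is the geometric step: turning the infinitesimal identity $\nabla_T T = 0$ into the global statement that the flow of $T$ on a suitable neighborhood is the normal exponential map of $L$, so that the flow parameter may legitimately be replaced by the distance function. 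Everything after this identification is a routine separation of variables.
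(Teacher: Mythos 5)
Your argument is correct, but it follows a genuinely different route from the paper. The paper's proof is a two-line reduction: it sets $d=h^{-1}\circ F$, computes $\nabla d=\frac{1}{b\circ F}\nabla F$ so that $\vert\nabla d\vert=1$, and then invokes an external result (Theorem 5.3 of \cite{MR2681534}) asserting that a function with unit gradient is locally a distance function to a hypersurface, after which $F=h\circ d$ is immediate. You instead prove the needed geometric fact from scratch: from $\vert\nabla F\vert^2=b(F)^2$ and the symmetry of the Hessian you get $\nabla_{\nabla F}\nabla F=b(F)b'(F)\nabla F$, hence $\nabla_TT=0$ for $T=\nabla F/\vert\nabla F\vert$; the flow lines of $T$ issuing from a level set $L$ are then unit-speed geodesics normal to $L$, so by uniqueness of geodesics they are the normal exponential map of $L$, and within a tubular neighborhood their arc-length parameter is the distance to $L$; finally the one-dimensional ODE $dF/ds=b(F)$ integrates to $s=h^{-1}(F)$, giving $F=h\circ d$. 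In effect you re-prove the cited Theorem 5.3 in the generality you need, which makes your argument self-contained (and also recovers item 3 of Theorem \ref{teo:teoremon} along the way), at the cost of being longer; the paper's substitution $d=h^{-1}\circ F$ buys brevity by reducing at once to the case $b\equiv 1$ and outsourcing the geometry. One small point to tidy up: since you take $L=F^{-1}(c_0)$ through the base point $p_0$ and $d$ in Proposition \ref{proposicion:solution-eikonal} is the unsigned distance, your identity $s=d$ holds only on the side of $L$ where $F\ge c_0$ (on the other side $s=-d$); either work with the signed distance, or choose $L$ to be a nearby level set $F^{-1}(c_0-\delta)$ so that the neighborhood of $p_0$ under consideration lies entirely on one side. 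This is a routine adjustment (and the paper's own statement is equally loose on this point), not a gap in the method.
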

\begin{proof}
Let $d=h^{-1} \circ F$, where $h^{-1}$ is given by equation
(\ref{eq:h-inversa}). Let us calculate the gradient of $d$ in
$\mathbb{P}$ :
\[ \nabla d = \nabla (h^{-1} \circ F)= ((h^{-1})'\circ F) \nabla F = \frac{1}{b\circ F} \nabla F.\]

Equation (\ref{eq:eikonal}) implies that $|\nabla d| =  1 $. Using Theorem 5.3 in \cite{MR2681534}, we deduce that $d$ is a distance function to a hypersurface $L\subset N$. This proves that $F=h
\circ d$ has the form given in Proposition
\ref{proposicion:solution-eikonal}.
\end{proof}

We translate these results into the language of canonical principal direction hypersurfaces.

\begin{corolario}
Let $N$ be a Riemannian
manifold, $L\subset N$ an orientable hypersurface of $N$ and $L_\epsilon$ a tubular neighborhood of $L$ such that the
distance function $d$ to $L$ is well-defined in $L_\epsilon$ and is
differentiable in $L_\epsilon\setminus L$. Also, let $b:I\to\mathbb R$ be a differentiable
positive function and define a real
valued, invertible function $h:I\to\mathbb R^+$ by
\[
h^{-1}(s)=\int_{s_0}^s\frac{d\sigma}{b(\sigma)}.
\]
Then the graph of
$F=h\circ d$ is a hypersurface in $\bar{M}=I\times_\varrho N$ with canonical principal direction. Moreover, any canonical principal direction hypersurface given as a graph of a transnormal function $F$ has this form, at least locally.
\end{corolario}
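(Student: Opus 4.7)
The plan is to show that this corollary is essentially a bookkeeping statement assembling the propositions just proved, with no genuinely new content. The two halves unwind independently.

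For the forward direction, I would start from the hypotheses on $L$, $d$, $b$, and $h$. By Proposition \ref{proposicion:solution-eikonal}, the function $F=h\circ d$ satisfies the generalized eikonal equation $|\nabla F|=b\circ F$ on $L_\epsilon\setminus L$, so $F$ is transnormal in the sense of Definition \ref{defeikonal}. Then Proposition \ref{graph-of-transnormal} applied in $\bar M=I\times_\varrho N$ shows that the graph of $F$ is a hypersurface with canonical principal direction relative to $\partial_t$. So the first part is two invocations glued together.

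For the converse, let $M$ be a canonical principal direction hypersurface given as the graph of a transnormal function $F:U\to I$ with $U\subseteq N$ open. Proposition \ref{converse graph-of-transnormal} is not strictly required here since $F$ is already assumed transnormal, so $|\nabla F|=b\circ F$ for some nonnegative function $b$. In order to invoke Proposition \ref{prop:local-solutions}, I need $b$ to be differentiable and positive at the points of interest; this is where I would be careful, shrinking to an open subset of $U$ where $b\circ F>0$ (i.e., where $\nabla F$ does not vanish, equivalently $\theta\ne\pi/2$, so the graph is not tangent to a slice). On such an open set, Proposition \ref{prop:local-solutions} furnishes, locally, a hypersurface $L\subset N$ and a function $h$ of the prescribed integral form so that $F=h\circ d_L$. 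Feeding this $L$ and $h$ back into the first part completes the local identification.

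The only mild obstacle, then, is the standard domain-restriction issue: ensuring $b$ is positive and differentiable where we apply Proposition \ref{prop:local-solutions}, which is why the statement only claims local equivalence. Once the domain is chosen so that the gradient of $F$ is nonvanishing (which is precisely the transversality assumption already present throughout the paper, via $\theta\ne\pi/2$), the two propositions dovetail exactly, and no further computation is needed.
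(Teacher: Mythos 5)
Your proposal is correct and follows exactly the paper's own argument: the first assertion is obtained by combining Proposition \ref{proposicion:solution-eikonal} with Proposition \ref{graph-of-transnormal}, and the converse is Proposition \ref{prop:local-solutions} applied locally. The extra care you take about where $b\circ F$ is positive so that Proposition \ref{prop:local-solutions} applies is a reasonable (and harmless) elaboration of the same proof.
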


\begin{proof}
The first assertion follows from Proposition \ref{proposicion:solution-eikonal} and Proposition \ref{graph-of-transnormal}. The second follows from Proposition \ref{prop:local-solutions}.
\end{proof}

\section{Constant mean curvature hypersurfaces}\label{sec:particularinstances}

In this section we will study two particular yet important instances of canonical principal direction hypersurfaces. Firstly, we will characterize canonical principal direction hypersurfaces with constant mean curvature in Euclidean spaces as Delaunay surfaces. In the second case, we specialize to the 
situation when the ambient is a Riemannian product of the form $\mathbb R\times N$, while the hypersurface $M$ has constant mean curvature and makes a constant angle with the vector field $\partial_t$ tangent to the $\mathbb R$-direction. Under these assumptions we will prove that $M$ is totally geodesic or a cylinder.

\bigskip

Let us then consider the case of an Euclidean ambient space $\mathbb R^{n+1}$, which is obviously a warped product $\mathbb R\times\mathbb R^n$ with constant warping function $\varrho\equiv 1$.

Let $M$ be a hypersurface in $\mathbb{R}^{n+1}$ with a canonical principal direction relative to a constant vector field, say, $X_0=(1,0,\dots,0)$, given as the graph of a function $F: U \subseteq \mathbb{R}^n \longrightarrow \mathbb{R}$, where $U$ is an open set of $\mathbb{R}^n$ and $F$ is transnormal. By Proposition \ref{prop:local-solutions} there is an orientable hypersurface $L$ in $\mathbb R^n$ such that $F=h\circ d$, where $d$ is the distance function to $L$ and $h$ satisfies equation (\ref{eq:h-inversa}). In fact, any such $L$ is just a level surface of $F$.

Now take a unit vector field $\eta$ normal to the hypersurface $L$ in $\mathbb R^n$ and consider the points $q$ of the form $q=p+t\eta(p)$, $p\in L$. In an adequate tubular neighborhood of $L$ in $\mathbb R^n$, we have
\[
F(q)=h\circ d(q) = h(t),
\]
which means that the intersection of $M$ (i.e., the graph of $F$) with the plane passing through $(0,p)\in\mathbb R\times\mathbb R^n$ and spanned by $(0,\eta(p))$ and $X_0$ is precisely the graph of $h$. In particular, the intersections of $M$ with these planes are all congruent.

\begin{remark}\label{rem:congruencia} The above argument and a reparametrization of the graph of $h$ by arc length show that every surface in $\mathbb R^3$ with canonical principal direction relative to a constant vector $X_0$ given as the graph of a transnormal function has a parametrization of the form given by equation (\ref{eq:curvascongruentes}) in Section \ref{sec:examples}, namely,
\[
\varphi(s,t)=\gamma(s)+f(t)\eta(s)+g(t) X_0,
\]
where $\gamma$ is a planar curve and $\eta(s)$ is a unit vector field normal to $\gamma$.
\end{remark}

In short, $M$ is locally constructed by considering an orientable hypersurface $L$ in $\mathbb R^n$ with an unit normal vector field $\eta$ and by putting a copy of a planar curve $\gamma$ in the plane passing through $(0,p)\in\mathbb R\times\mathbb R^n$, $p\in L$,
determined by $\eta$ and $X_0$. Let us observe that $T$, the unit vector field in the direction of the projection of $X_0$ into the tangent space of $M$, is a vector field tangent to each of these copies of $\gamma$. 

\begin{proposicion}
\label{prop:CMC}
Let $M$ be a hypersurface in $\mathbb{R}^{n+1}$ with a canonical principal direction relative to $X_0=(1,0,\dots,0)$, given as a graph of a transnormal function $F:U\subset\mathbb R^n\to\mathbb R$. If $t\in\mathbb R$ is such that $F^{-1}(t)\ne\emptyset$, let
\[
M_t:= \{ t \} \times F^{-1}(t)
\]
be a slice of $M$. If $M$ has constant mean curvature in $\mathbb{R}^{n+1}$ then
\begin{itemize}
\item $M_t$ has constant mean curvature in $\mathbb{R}^{n+1}$ for every $t$, and
\item $M_t$ has constant mean curvature in $M$ for every $t$.
\end{itemize}
\end{proposicion}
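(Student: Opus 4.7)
I will relate the three relevant second fundamental forms at points of a slice $M_t$: that of $M$ in $\mathbb R^{n+1}$, that of $M_t$ in $M$, and that of $F^{-1}(t)$ in $\mathbb R^n$. Once these are tied together by explicit linear relations whose coefficients turn out to depend only on $F$, both conclusions will drop out of the CMC hypothesis on $M$. The starting observation is that $T$, being a principal direction of $M$, spans the orthogonal complement of $TM_t$ inside $TM$, so it serves as the unit normal of $M_t$ in $M$; and that $M_t$, lying in the horizontal hyperplane $\{t\}\times\mathbb R^n$, is isometric to $F^{-1}(t)\subset\mathbb R^n$ with the same second fundamental form. In particular, the first bullet reduces to showing that $F^{-1}(t)$ has CMC in $\mathbb R^n$.

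Next comes the main computation. The normal plane of $M_t$ in $\mathbb R^{n+1}$ is spanned on the one hand by $\{\nu,T\}$, where $\nu$ is the unit normal of $M$, and on the other by $\{\partial_t,N\}$, where $N=\nabla F/|\nabla F|$. The change of basis is governed by $\theta$:
\[
\partial_t=\sin\theta\,T+\cos\theta\,\nu,\qquad N=\cos\theta\,T-\sin\theta\,\nu.
\]
Substituting these into the Gauss formulas for $M\subset\mathbb R^{n+1}$, $M_t\subset M$ and $F^{-1}(t)\subset\{t\}\times\mathbb R^n$, and using that $\{t\}\times\mathbb R^n$ is totally geodesic in $\mathbb R^{n+1}$, I obtain
\[
\alpha^M(Y,Z)=-\sin\theta\,\beta(Y,Z),\qquad h^{M_t\subset M}(Y,Z)=\cos\theta\,\beta(Y,Z)
\]
for every $Y,Z\in TM_t$, where $\beta$ is the scalar second fundamental form of $F^{-1}(t)$ in $\mathbb R^n$ with respect to $N$. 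Thus the three shape operators share principal directions, with principal curvatures proportional by the factors $-\sin\theta$ and $\cos\theta$.

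Now I must see that $\sin\theta$, $\cos\theta$, and the remaining principal curvature $\lambda$ of $M$ along $T$ are constant along each slice $M_t$. For $\theta$ this is automatic: by transnormality $|\nabla F|=b\circ F$, so formula (\ref{eq:angulo}) expresses $\cos\theta$ as a function of $F$. For $\lambda$, substituting $Z=T$, $|X|=1$, $\phi\equiv 0$ into equation (\ref{eq:tangente}) and using the already-proven facts $\nabla_TT=0$ and $A_\xi T=\lambda T$ reduces the tangential part to $\lambda\cos\theta=T(\sin\theta)$, whose right-hand side depends only on $F$.

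Finally, the CMC equation reads $nH_M=\lambda+\sum_{i}\mu_i=\lambda-\sin\theta\sum_{i}k_i$, constant on $M$. Restricted to any slice $M_t$, the scalars $\lambda$ and $\sin\theta$ are constant, so $\sum_i k_i$ is forced to be constant along $M_t$; this is the first bullet. The second bullet follows at once since the mean curvature of $M_t$ inside $M$ equals $(n-1)^{-1}\cos\theta\sum_i k_i$, a product of two quantities each constant along $M_t$. The main (and only mild) obstacle I foresee is the bookkeeping in the Gauss-formula step, since one must juggle three distinct immersions and two orthonormal bases of the two-dimensional normal plane of $M_t$ in $\mathbb R^{n+1}$; everything else is a direct substitution.
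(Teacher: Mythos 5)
Your proof is correct, but it takes a genuinely different route from the paper's in the one step where all the work is hidden. The paper works with the additive relation $\overline\alpha_t=\alpha_t+\alpha$ between the second fundamental forms of $M_t\subset M\subset\mathbb R^{n+1}$, sums to get $\overline H_t=H_t+H-\alpha(T,T)$, and then projects this identity onto $\xi$ and onto $X_0$; the decisive input is that $\vert\alpha(T,T)\vert=\vert\bar\nabla_TT\vert$ is constant along each slice, which the paper obtains geometrically from the congruence of the integral curves of $T$ (Remark \ref{rem:congruencia}, i.e.\ the $F=h\circ d$ structure of a transnormal function). You instead exploit that the normal plane of $M_t$ in $\mathbb R^{n+1}$ carries the two orthonormal frames $\{T,\nu\}$ and $\{\partial_t,\nabla F/\vert\nabla F\vert\}$ related by rotation through $\theta$, so that on $TM_t$ the second fundamental forms of $M_t$ in $M$ and of $M$ in $\mathbb R^{n+1}$ are the $\cos\theta$- and $(-\sin\theta)$-multiples of that of $F^{-1}(t)\subset\mathbb R^n$ (your signs depend on orientation conventions but are immaterial for the trace argument); and you get the constancy of $\lambda=\langle\alpha(T,T),\xi\rangle$ along slices analytically, from equation (\ref{eq:tangente}) with $\phi\equiv0$, $\vert X\vert\equiv1$, $\nabla_TT=0$, which gives $\lambda\cos\theta=T(\sin\theta)$. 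This buys you independence from the explicit profile-curve construction: you never need Remark \ref{rem:congruencia}, only transnormality and the structural equation for the closed conformal field, whereas the paper's version avoids the frame bookkeeping and reads the two conclusions directly off the mean curvature vector identity. One step you should write out: to conclude that $T(\sin\theta)$ is constant along $M_t$ you need that both $\sin\theta$ and $T(h)=\vert\nabla h\vert$ are functions of the height $h$ alone (the first because $\cos\theta=\bigl(1+\vert\nabla F\vert^2\bigr)^{-1/2}$ and $\vert\nabla F\vert=b\circ F$, the second by item 4 of Theorem \ref{teo:teoremon}), so the chain rule gives $T(\sin\theta)$ as a function of $h$; together with $\cos\theta\neq0$ on a graph this makes $\lambda$ constant on each slice, and the rest of your trace argument goes through as stated.
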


\begin{proof}
Let $p=(t,x) \in M_t$, i.e., $F(x)=t$. A basic property of the second fundamental forms for $M_t \subset M \subset \mathbb{R}^{n+1}$ says that
\begin{equation}
\label{eqn:secondff}
 \overline\alpha_t(X,Y) = \alpha_t(X,Y) + \alpha(X,Y)   ,
\end{equation}
for every $X,Y \in T_pM_t$. Here $\overline\alpha_t, \alpha_t, \alpha$ are the second fundamental forms of $M_t$ in $\mathbb{R}^{n+1}$, that of $M_t$ in $M$ and that of $M$ in $\mathbb{R}^{n+1}$, respectively.

Let $X_1, \ldots , X_n$
be a orthonormal basis of $T_pM_t$. 
Therefore $$X_1, \ldots , X_n, T$$ is a orthonormal basis of $T_pM$. By definition of the corresponding mean curvature vectors we have that
\[
\overline H_t= \sum_{i=1}^n \overline\alpha_t(X_i, X_i), \ H_t= \sum_{i=1}^n \alpha_t(X_i, X_i), \ H= \sum_{i=1}^n \alpha(X_i, X_i) + \alpha(T,T).
\]

By equation (\ref{eqn:secondff}), the mean curvature vectors are related by
\begin{equation}
\label{eqn:mcvf}
\overline H_t=H_t + H - \alpha(T,T).
\end{equation}

If $\xi$ is a unit vector field normal to $M$, we observe that $H$ and $\alpha(T,T)$ are scalar multiples of $\xi$, while $\overline H_t$ is a scalar multiple of the lift of the gradient $\nabla F$
and $H_t$ is a multiple of $T$. By Theorem \ref{teo:teoremon}, the angle $\theta \ne 0$ between $\xi$ and $X_0$ is constant along $M_t$. This implies that the angle $\theta\pm\pi/2$ between $\nabla F$ and $\xi$ is also constant along $M_t$ and different from $\pm\pi/2$. So, we deduce from (\ref{eqn:mcvf}) that
\begin{eqnarray*}
\cos(\theta\pm\pi/2) \vert \overline H_t\vert & = & \langle \overline H_t , \xi \rangle = \langle H_t, \xi \rangle +
\langle H, \xi \rangle - \langle \alpha(T,T), \xi \rangle\\ & = & \vert H\vert -\vert \alpha(T,T)\vert,
\end{eqnarray*}
where $\cos(\theta\pm\pi/2)\ne 0$. By Theorem \ref{teo:teoremon}, the integral curves of $T$ are geodesics in $M$, which implies
\[
\vert \alpha(T,T)\vert  = \vert \nabla_T T + \alpha(T, T) \vert  =  \vert \bar\nabla_T T \vert,
\]
where as before $\nabla$ is the Levi-Civita connection of $M$ and $\bar\nabla$ is the standard Levi-Civita connection of $\mathbb{R}^{n+1}$. We observe that the last term $\vert \bar\nabla_T T \vert$ is constant along $M_t$, because it measures the curvature in $\mathbb{R}^{n+1}$ of the integral
curves of $T$, which we have seen to be congruent. Since $\vert H\vert$ is constant by hypothesis, the mean curvature $\vert \overline H_t\vert$ of $M_t$ in $\mathbb{R}^{n+1}$ is constant for each $t$.

On the other hand, we take the scalar product of (\ref{eqn:mcvf}) with $X_0$ to obtain
\begin{eqnarray*}
0 & = & \langle \overline H_t, X_0  \rangle = \langle H_t , X_0  \rangle + \langle H, X_0  \rangle
-\langle \alpha(T,T), X_0  \rangle \\[0.1cm]
  & = & \cos(\theta\pm\pi/2) \vert H_t\vert  + (\cos\theta)(\vert H\vert  - \vert \alpha(T,T)\vert).
\end{eqnarray*}

Here, we are using that $\nabla F$ is orthogonal to $X_0$. Since the second summand is constant along $M_t$ and $\cos(\theta\pm\pi/2)\ne 0$, the mean curvature $\vert H_t\vert$ of $M_t$ in $M$ is constant, which concludes our proof. \end{proof}

Munteanu and Nistor proved in \cite{MR2772433} that the only minimal surface in $\mathbb R^3$ with a canonical principal direction (relative to a constant vector field) besides the plane is the catenoid. We extend their result to the constant mean curvature case and the Delaunay surfaces, that is, those surfaces of revolution with constant mean curvature.

\begin{corolario}
Let $M$ be an immersed connected surface in $\mathbb{R}^3$ with canonical principal direction relative to a constant vector field.
If $M$ has constant mean curvature then $M$ is (part of) a Delaunay surface.
\end{corolario}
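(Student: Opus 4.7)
The plan is to use Remark \ref{rem:congruencia} to put $M$ in the canonical parametric form exhibited there, apply Proposition \ref{prop:CMC} to understand its slices, and then conclude with Delaunay's classical classification of CMC surfaces of revolution. On the open subset where $\theta\ne\pi/2$ the surface $M$ is locally the graph of a transnormal function $F$, and Remark \ref{rem:congruencia} provides a parametrization
\[
\varphi(s,t)=\gamma(s)+f(t)\eta(s)+g(t)X_0,
\]
where $\gamma$ is a planar curve lying in a plane orthogonal to $X_0$, $\eta$ is a unit normal to $\gamma$ in that plane, and $(f,g)$ is an arc-length planar curve. For each fixed $t$ the slice $M_t=\{t\}\times F^{-1}(t)$ lies in a plane orthogonal to $X_0$ and coincides, up to translation by $g(t)X_0$, with the curve parallel to $\gamma$ at distance $f(t)$.

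Applying Proposition \ref{prop:CMC} with $n=2$, the constancy of the mean curvature of $M$ in $\mathbb R^3$ forces each one-dimensional slice $M_t$ to have constant mean curvature in $\mathbb R^3$, i.e. constant plane curvature. Using the classical formula
\[
\kappa_t(s)=\frac{\kappa(s)}{1-f(t)\kappa(s)}
\]
for the curvature of a parallel curve at distance $f(t)$, and requiring $\kappa_t(s)$ to be independent of $s$ for every attained value of $f(t)$, one concludes that the curvature $\kappa$ of $\gamma$ is itself constant. Hence $\gamma$ is either a circle or a straight line.

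If $\gamma$ is a circle of radius $r$, the parametrization $\varphi$ exhibits $M$ as a surface of revolution about the axis parallel to $X_0$ through the center of $\gamma$, with meridian $(r+f(t),g(t))$, and Delaunay's theorem identifies $M$ as a Delaunay surface. If $\gamma$ is a straight line, $M$ is a cylinder over the planar generator $(f(t),g(t))$; the CMC hypothesis then forces this generator to have constant planar curvature, so $M$ is a right circular cylinder or a plane, which are (possibly degenerate) Delaunay surfaces. Connectedness of $M$, together with the real analyticity of CMC surfaces and of the Delaunay family, glues the local picture into a single global Delaunay surface. The step that deserves the most attention is the deduction that $\kappa$ is constant from the family of parallel-curve curvatures: this is automatic when $f$ is locally non-constant, and the degenerate case $f\equiv\text{const}$ reduces $M$ to a cylinder over a single parallel of $\gamma$, for which the CMC condition again forces that parallel, and hence $\gamma$, to be a circle or a line.
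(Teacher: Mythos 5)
Your proposal is correct and follows essentially the same route as the paper: represent $M$ locally as the graph of a transnormal function, apply Proposition \ref{prop:CMC} to make the slices curves of constant curvature, and conclude via the line/circle dichotomy that $M$ is locally a cylinder or a surface of revolution, hence (part of) a Delaunay surface. The only cosmetic difference is that you deduce constancy of $\kappa$ for the base curve $\gamma$ through the parallel-curve curvature formula, whereas the paper simply takes each slice itself as the constant-curvature curve $\gamma$ in the parametrization of Remark \ref{rem:congruencia}.
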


\begin{proof}
We can assume that $M$ has canonical principal direction with respect to the constant vector field $(1,0,0)$.
Proposition \ref{converse graph-of-transnormal} proves that $M$ is locally the graph of a transnormal function.
By Proposition \ref{prop:CMC}, every slice $M_t$ has constant mean curvature in $\mathbb R^3$, which means that each connected component of $M_t$ is a planar curve $\gamma$ with constant curvature, that is, a line segment or a circular arc. Recall also (see Remark \ref{rem:congruencia}) that $M$ is reconstructed by attaching to each $\gamma(s)$ a curve $\beta(t)=(f(t),g(t))$ contained in the plane orthogonal to $\gamma(s)$. That is, we attach to $\gamma$ a family of congruent curves.

Suppose that $\gamma$ is a line segment. By attaching to it copies of $\beta$ we obtain that locally $M$ is a cylinder over $\beta$. Since $M$ has constant mean curvature, the curvature of $\beta$ is constant as well and again, it must be a line segment or a circular arc. In the first case, $M$ is locally a plane, while in the second is locally a right circular cylinder.

On the other hand, if $\gamma$ is a circular arc, attaching copies of $\beta$ to it produces a surface of revolution. Since $M$ has constant mean curvature, it is part of a Delaunay surface.
\end{proof}

In particular, if $M$ is a connected, complete, minimal surface in $\mathbb R^3$ with canonical principal direction relative to a constant vector field, then it must be a catenoid or a plane.

\bigskip

We conclude this paper by characterizing constant angle hypersurfaces in a Riemannian product.  We will use the Bochner formula, valid for any smooth function $F$ over a Riemannian manifold:
\begin{equation}
\label{eqn:bochner}
\frac{1}{2} \Delta |\nabla F|^2 = \langle \nabla F, \nabla \Delta  F  \rangle - \operatorname{Ric}(\nabla F, \nabla F) - |\operatorname{Hess} F|^2.
\end{equation}
See \cite{MR2088027} for the proof of this important formula, as well as Sakai's paper \cite{MR1374461} whose ideas inspired the proof of our result.

\begin{teorema}
Let $N$ be a Riemannian manifold with nonnegative Ricci curvature.
Let $M \subset \mathbb{R} \times N $ be an immersed hypersurface making a constant angle with the vector field $\partial_t$ tangent to the $\mathbb R$-direction. 
If $M$ has constant mean curvature then $M$ is either totally geodesic or it is part of a cylinder over
a constant mean curvature hypersurface immersed in $N$.
\end{teorema}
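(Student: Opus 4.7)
The plan is to split on the constant angle $\theta$ between the unit normal $\xi$ of $M$ and $\partial_t$, exploiting that in the Riemannian product case the warping function is $\varrho\equiv 1$, so the angle formula (\ref{eq:angulo}) forces $|\nabla F|$ to be a \emph{global} constant, not merely constant along level sets. If $\theta=\pi/2$ everywhere, then $\partial_t$ is tangent to $M$; since its integral curves remain in $M$, the hypersurface is invariant under the $\mathbb{R}$-flow and hence a cylinder $\mathbb{R}\times L$ with $L=M\cap(\{0\}\times N)$. The shape operator of such a cylinder vanishes on $\partial_t$ and coincides with the shape operator of $L$ on $TL$, so the mean curvature of $M$ in $\mathbb{R}\times N$ is a fixed nonzero multiple of that of $L$ in $N$; thus $M$ being CMC forces $L$ to be CMC, settling this case.

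Assume now $\theta\neq\pi/2$. By connectedness this inequality holds throughout $M$, and locally $M$ is the graph of a function $F$ defined on an open subset of $N$. Substituting $\varrho\equiv 1$ in (\ref{eq:angulo}) gives $|\nabla F|^2=\tan^2\theta$, a positive constant. The usual divergence formula for the mean curvature of a graph in a Riemannian product,
\[
nH=\operatorname{div}_N\!\left(\frac{\nabla F}{\sqrt{1+|\nabla F|^2}}\right),
\]
simplifies under constant $|\nabla F|$ to $\Delta F=nH\sqrt{1+|\nabla F|^2}$, which is also constant. Feeding $F$ into the Bochner formula (\ref{eqn:bochner}), the left side vanishes because $|\nabla F|^2$ is constant and $\langle\nabla F,\nabla\Delta F\rangle$ vanishes because $\Delta F$ is constant, leaving
\[
\operatorname{Ric}_N(\nabla F,\nabla F)+|\operatorname{Hess} F|^2=0.
\]
Since $\operatorname{Ric}_N\geq 0$ and both summands are nonnegative, both must vanish; in particular, $\operatorname{Hess} F\equiv 0$.

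To conclude, I translate $\operatorname{Hess} F=0$ into a geometric statement on $M$. Writing a tangent vector to the graph as $E_v=dF(v)\,\partial_t+v$ for $v\in TN$ lifted to $\bar M$, the product structure together with the parallelism of $\partial_t$ in $\mathbb{R}\times N$ yield, for the (unnormalized) normal $\xi=\partial_t-\nabla F$,
\[
\bar\nabla_{E_v}\xi=-\bar\nabla_v\nabla F=-(\operatorname{Hess} F)(v,\cdot)^\sharp=0;
\]
combined with $|\xi|=\sqrt{1+|\nabla F|^2}$ being constant, this means the unit normal of $M$ is parallel along $M$ in the ambient, so the shape operator vanishes and $M$ is totally geodesic. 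The main obstacle I anticipate is bookkeeping with sign conventions in the Bochner formula (\ref{eqn:bochner}) and in the graph mean-curvature formula; globalization of the totally geodesic conclusion is automatic because $\operatorname{Hess} F=0$ holds on every coordinate patch and being totally geodesic is a pointwise condition that patches together.
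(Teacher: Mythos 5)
Your proposal is correct and follows essentially the same route as the paper: split on whether $\partial_t$ is tangent to $M$ (cylinder over a CMC hypersurface of $N$) or $M$ is locally a graph of $F$, then use constant angle $\Rightarrow |\nabla F|$ constant, CMC $\Rightarrow \Delta F$ constant, and the Bochner formula (\ref{eqn:bochner}) with $\operatorname{Ric}\ge 0$ to force $\operatorname{Hess} F\equiv 0$. The only, harmless (indeed slightly cleaner) deviations are that you obtain the eikonal property $|\nabla F|^2=\tan^2\theta$ directly from equation (\ref{eq:angulo}) with $\varrho\equiv 1$ instead of citing \cite{gpr1}, and you conclude total geodesy by checking that the normal $\xi=\partial_t-\nabla F$ (of constant length) is parallel along $M$ in the product, whereas the paper argues via the foliation by the totally geodesic slices $\{t\}\times F^{-1}(t)$ together with the parallelism of $T$.
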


\begin{proof}
By hypothesis the angle $\theta$ between $M$ and $\partial_t$ is constant. If $\theta=0$, then $\partial_t$
is tangent to $M$, i.e., $M$ is foliated by the integral lines of $\partial_t$, which are known to be geodesics in $\mathbb{R} \times N$. This says that $M$ is part of the 
cylinder $\mathbb{R}\times\pi(M)$, where $\pi: \mathbb{R}\times N \to \mathbb{R}$ is the natural projection and $\pi(M)$ is 
an immersed hypersurface in $N$. Since $M$ has constant mean curvature, the same happens with $\pi(M)$.

If $\theta\ne 0$, then 
$M$ is locally the graph of a function 
$F: U \subset N \to \mathbb{R}$. The main result from our previous work \cite{gpr1} states that $|\nabla F| =c$, a constant; i.e., $F$ 
is an eikonal function. On the other hand, it is well known that the mean curvature vector $H$ of the graph of a function is given by
$$H=\operatorname{div}\left(\frac{\nabla F}{(1+ |\nabla F|^2)^{1/2}}\right)$$
Since $M$ has constant mean curvature and $F$ is eikonal, we conclude that
$\Delta F=\operatorname{div}\nabla F $ is a constant function. So, we have that $F$ is eikonal with constant Laplacian.
Using Bochner formula (\ref{eqn:bochner}) and the hypothesis on the Ricci curvature of $N$ we conclude that $\operatorname{Hess} F $ vanishes identically. Since $\operatorname{Hess} F(X,Y)= \langle \nabla_ X \nabla F , Y \rangle$, we have that  $\nabla F$ is a parallel vector field of $N$.

This fact implies in turn that the level hypersurfaces $L_t=F^{-1}(t)$ of $F$ are totally geodesic in $N$; then  
$\{ t \}\times L_t$ is totally geodesic in $\mathbb{R}\times N$ because every $\{t\}\times N$ is 
totally geodesic in $\mathbb{R}\times N$. We conclude that  $M$ is foliated by the totally geodesic
hypersurfaces $\{ t \}\times L_t$. 

On the other hand, consider the vector field over $M$ given by
$$T= \frac{|\nabla F|^2 \partial_t + \nabla F}{|\nabla F|(1+  |\nabla F|^2)^{1/2}},$$
which is just the unit vector field in the direction of the component of $\partial_t$ tangent to $M$. Here we are using the lift of $\nabla F$ into the Riemannian product $\mathbb{R}\times N$. Since $\vert\nabla F\vert$ is constant, we may write the above as
$$T=c_1\partial_t+c_2\nabla F,$$
where $c_1,c_2$ are constant. Therefore, $T$ is parallel in $M$ because it is a sum of parallel vector fields in $\mathbb{R}\times N$.
So, the integral lines of $T$ are geodesics in the above product.
This proves that $M$ is totally geodesic.
\end{proof}

\bibliographystyle{plain}
\bibliography{references}

\begin{thebibliography}{10}

\bibitem{MR1363411}
Manuel Barros.
\newblock General helices and a theorem of {L}ancret.
\newblock {\em Proc. Amer. Math. Soc.}, 125(5):1503--1509, 1997.

\bibitem{MR1990032}
J{\"u}rgen Berndt, Sergio Console, and Carlos Olmos.
\newblock {\em Submanifolds and holonomy}, volume 434 of {\em Chapman \&
  Hall/CRC Research Notes in Mathematics}.
\newblock Chapman \& Hall/CRC, Boca Raton, FL, 2003.

\bibitem{MR2273746}
Jong~Taek Cho, Jun-ichi Inoguchi, and Ji-Eun Lee.
\newblock On slant curves in {S}asakian 3-manifolds.
\newblock {\em Bull. Austral. Math. Soc.}, 74(3):359--367, 2006.

\bibitem{Scala}
Antonio Di~Scala and P.~Cermelli.
\newblock Constant-angle surfaces in liquid cristals.
\newblock {\em Philosophical Magazine}, 87, 2007.

\bibitem{MR2541342}
Antonio~J. Di~Scala.
\newblock Weak helix submanifolds of {E}uclidean spaces.
\newblock {\em Abh. Math. Semin. Univ. Hambg.}, 79(1):37--46, 2009.

\bibitem{MR2681534}
Antonio~J. Di~Scala and Gabriel Ruiz-Hern{\'a}ndez.
\newblock Higher codimensional {E}uclidean helix submanifolds.
\newblock {\em Kodai Math. J.}, 33(2):192--210, 2010.

\bibitem{MR2506241}
Franki Dillen, Johan Fastenakels, and Joeri Van~der Veken.
\newblock Surfaces in {$\mathbb S^2\times\mathbb R$} with a canonical principal
  direction.
\newblock {\em Ann. Global Anal. Geom.}, 35(4):381--396, 2009.

\bibitem{Di-Mu-Ni}
Franki Dillen, Marian~Ioan Munteanu, and Ana-Irina Nistor.
\newblock Canonical coordinates and principal directions for surfaces in
  $\mathbb{H}^2 \times \mathbb{R}$.
\newblock {\em Taiwanese Math. J.}

\bibitem{MR2785730}
Franki Dillen, Marian~Ioan Munteanu, Joeri Van~der Veken, and Luc Vrancken.
\newblock Classification of constant angle surfaces in a warped product.
\newblock {\em Balkan J. Geom. Appl.}, 16(2):35--47, 2011.

\bibitem{MR2088027}
Sylvestre Gallot, Dominique Hulin, and Jacques Lafontaine.
\newblock {\em Riemannian geometry}.
\newblock Universitext. Springer-Verlag, Berlin, third edition, 2004.

\bibitem{gpr1}
Eugenio Garnica, Oscar Palmas, and Gabriel Ruiz-Hern\'andez.
\newblock Hypersurfaces making a constant angle with a closed conformal vector
  field.
\newblock {\em arXiv:1104.0050[math.DG], 2011}.

\bibitem{MR2443735}
Jun-ichi Inoguchi and Sungwook Lee.
\newblock Null curves in {M}inkowski 3-space.
\newblock {\em Int. Electron. J. Geom.}, 1(2):40--83, 2008.

\bibitem{MR1722814}
Sebasti{\'a}n Montiel.
\newblock Unicity of constant mean curvature hypersurfaces in some {R}iemannian
  manifolds.
\newblock {\em Indiana Univ. Math. J.}, 48(2):711--748, 1999.

\bibitem{MR2681099}
Marian~Ioan Munteanu.
\newblock From golden spirals to constant slope surfaces.
\newblock {\em J. Math. Phys.}, 51(7):073507, 9, 2010.

\bibitem{MR2537560}
Marian~Ioan Munteanu and Ana-Irina Nistor.
\newblock A new approach on constant angle surfaces in {$\mathbb E^3$}.
\newblock {\em Turkish J. Math.}, 33(2):169--178, 2009.

\bibitem{MR2772433}
Marian~Ioan Munteanu and Ana~Irina Nistor.
\newblock Complete classification of surfaces with a canonical principal
  direction in the {E}uclidean space {$\Bbb E^3$}.
\newblock {\em Cent. Eur. J. Math.}, 9(2):378--389, 2011.

\bibitem{MR1374461}
Takashi Sakai.
\newblock On {R}iemannian manifolds admitting a function whose gradient is of
  constant norm.
\newblock {\em Kodai Math. J.}, 19(1):39--51, 1996.

\bibitem{MR0901710}
Qi~Ming Wang.
\newblock Isoparametric functions on {R}iemannian manifolds. {I}.
\newblock {\em Math. Ann.}, 277(4):639--646, 1987.

\end{thebibliography}

\end{document}